\documentclass[11pt,reqno]{amsart}
\usepackage{amsmath}
\usepackage{amsthm}
\usepackage{amssymb}
\usepackage{enumerate}
\usepackage{mathtools}
\usepackage{amscd}
\usepackage{color}
\usepackage{graphicx}
\usepackage[all, cmtip]{xy}
\usepackage{soul}
\usepackage{esint}
\usepackage{hyperref}
\hypersetup{pdftex,colorlinks=true,allcolors=blue}
\usepackage{hypcap}
\usepackage[titletoc]{appendix}

\usepackage{amsrefs}

\theoremstyle{plain}
\newtheorem{lemma}{Lemma}

\newtheorem{theo}[lemma]{Theorem}

\newtheorem{remark}[lemma]{Remark}
\newtheorem{Def}[lemma]{Definition}
\newtheorem{thm-Intro}{Theorem} 
\newtheorem{cor-Intro}{Corollary} 

\numberwithin{equation}{section}

\newcommand{\sech}{\textup{sech}\,}

\textwidth 5.6 true in
\oddsidemargin 0.35 true in

\evensidemargin 0.35 true in

\setcounter{section}{0}

\pagestyle{myheadings}
\footskip=50pt

\setlength{\parskip}{1ex}

\mathtoolsset{showonlyrefs=true}

\begin{document}
\title[Octonionic Brownian windings]{Octonionic Brownian windings}

	\author{Gunhee Cho}
\address{Department of Mathematics\\
	University of California, Santa Barbara\\
	Santa Barbara, CA 93106}
\email{gunhee.cho@math.ucsb.edu}

 \author{Guang Yang}

\address{Department of Mathematics\\
University of Connecticut\\
196 Auditorium Road,
Storrs, CT 06269-3009, USA}

\email{guang.yang@uconn.edu}

\begin{abstract} 
We define and study windings along Brownian paths on octonionic, Euclidean, projective, and hyperbolic spaces which are isometric to $8$-dimensional Riemannian model spaces. In particular, the asymptotic laws of these windings are shown to be Gaussian for flat and spherical geometries while the hyperbolic winding exhibits different long-time behavior.
\end{abstract}

\maketitle

\tableofcontents

\section{Introduction}

A normed division algebra $A$ over a real field $\mathbb{R}$ is a division algebra over $\mathbb{R}$ which is also a normed vector space, with norm $||\cdot ||$ satisfying
\begin{equation*}
||x\cdot y|| = ||x||\cdot||y||,
\end{equation*}
for all $x$ and $y$ in $A$. A theorem of Hurwitz says that every normed united finite-dimensional algebra over $\mathbb{R}$ is isomorphic to one of $\mathbb{R},\mathbb{C},\mathbb{Q}$, or $\mathbb{O}$.

Hence with the norms $||{\cdot} ||$ of those four algebras $A$, one can talk about the spherical part $\frac{x}{||x||}$ of the element $x \in A$. 

One situation where the spherical part works nicely is the angular motion of the path on the punctured complex plane $\mathbb{C}\backslash \{0\}$. We have a nice polar decomposition of any smooth path $\gamma : [0,\infty) \rightarrow \mathbb{C}\backslash \{0\}$
\begin{equation*}
\gamma(t)=|\gamma(t)|\exp \left(i\int_{\gamma[0,t]} \frac{xdy-ydx}{x^2+y^2}  \right), t\geq 0.
\end{equation*}
Thus we call $\alpha=\frac{xdy-ydx}{x^2+y^2}$ the winding form around $0$ since this form provides the angular motion of the path $\gamma$. Based on this representation, one can also take the integral of the winding form along the path of a Brownian motion $\left(B(t) \right)_{t\geq 0}$ that does not start from $0$ which becomes the Brownian winding functional:
\begin{equation*}
\zeta(t)=\int_{B[0,t]}\alpha.
\end{equation*}
This functional has been studied in the literature and we refer the reader to \cite{YM80,RDYM99,WS00,FBJW16}, and references therein for more details. The goal of this paper is to introduce the octonionic winding form in $8$-dimensional Riemannian homogeneous spaces equipped with an octonionic structure and study Brownian paths.

Studying the winding process in the octonionic setting is indeed important from several perspectives. Of course, the winding process in the complex and quaternionic settings could be extended to the last possible octonionic setting. Although defining the winding process in an octonionic setting is a natural analog from the previous setting, obtaining practical calculations based on Yor’s methods \cite{YM80} is indeed non-trivial, especially for $\mathbb{O}H^1$. Moreover, the winding number process is closely related to the stochastic area process. In particular, in the case of the Heisenberg group, which is a flat-model space of the sub-Riemannian geometry, one can find that the Markov process consists of a Brownian motion and an area process on a horizontal distribution corresponding to the canonical sub-Laplacian. Under the complex field and the quaternion division algebra, the same results could be found by explicit calculation of the sub-Laplacian in the case of the Hopf fibration and the anti-de Sitter space; which are curved model spaces of sub-Riemannian geometry. However, in this explicit calculation, the fact that under the complex field and the quaternion division algebra, both the Hopf fibration and the anti-de Sitter space have contact structures and the fiber has a Lie-group structure can be found \cite{FBJW16,MR4161862}. In contrast, in the case of the octionic setting, structures crucially used for calculation in the complex field and the quaternionic setting are lost. But one can still obtain the radial part of the sub-Laplacian by using the symmetries of octonion \cite{FBGC19,FBGC20}. Therefore, in order to find out the relationship between the sub-Laplacian and the Markov process, a new calculation and verification method is required that does not depend on the structure given only to the complex and quaternionic settings. 

While the octonionic winding process is directly related by definition to the area process, we study the winding process on $8$-dimensional model spaces of Riemannian geometry. This study shows how the octonionic structure can still be used, while also providing a stepping stone for studying the area process and its sub-Riemannian geometry in an octonionic setting.

Our main results are the following: let $\zeta(t)$ be the octonionic Brownian winding functional. Then

\begin{itemize}
	\item on $\mathbb{O}$, the following convergence holds in distribution: 
	\begin{equation*}
	\lim_{t\rightarrow \infty}\frac{\sqrt{6}}{\sqrt{\log t}}\zeta(t)=\mathcal{N}(0,I_7).
	\end{equation*}
	Here, the right-hand side means the normal distribution. 
	\item on $\mathbb{O}P^1$, the following convergence holds in distribution: 
	\begin{equation*}
	\lim_{t\rightarrow \infty}\frac{\zeta(t)}{\sqrt{t}}\rightarrow \mathcal{N}(0,\frac{14}{3} I_7).
	\end{equation*}
	\item on $\mathbb{O}H^1$, we have the following asymptotic result: 
	\begin{equation*}
	\lim_{t\rightarrow \infty} \mathbb{E}(e^{i\lambda\cdot \zeta(t)} )=(\tanh(r(0)))^{\sqrt{9+\mid\lambda\mid^2}-3} \bigg(1+\frac{ 6\sqrt{9+\mid\lambda\mid^2}-18}{\cosh^6(r(0))}A(\lambda) \bigg),
	\end{equation*}
	where \(A(\lambda)\) is given by
	\begin{equation*}
	\frac{\cosh^4(r(0))}{12}+\frac{(\sqrt{9+| \lambda |^2}-2)\cosh^2(r(0))}{60}+\frac{| \lambda |^2-3\sqrt{9+| \lambda |^2}+11}{720}.
	\end{equation*}
\end{itemize}  


\section{Windings of Brownian motion on $\mathbb{O}$}

\subsection{Winding form on $\mathbb{O}$}
The construction principle of Cayley-Dickson algebras can be applied to all real normed division algebras, i.e., the fields of real and complex numbers $\mathbb{R}$ and $\mathbb{C}$, the skew field of Hamiltonian quaternions $\mathbb{H}$ and the non-associative alternative octonions $\mathbb{O}$ as special cases. We refer the reader for instance to \cite{MR1886087,MR4277360} and elsewhere for details and we call the last case the octonions.

The octonions $\mathbb{O}=\left\{x=\sum_{j=0}^{7}x_j e_j, x_j\in \mathbb{R} \right\}$ are an $8$-dimensional algebra with basis $e_0=1, e_1, e_2, e_3, e_4, e_5, e_6, e_7$, and their multiplication is given in the table.
{
\begin{center}
\begin{tabular}{|l|l|l|l|l|l|l|l|}
		\hline
		& $e_1$ & $e_2$ & $e_3$ & $e_4$ & $e_5$ & $e_6$ & $e_7$  \\ \hline
		$e_1$ & $-1$ & $e_4$ & $e_7$ & $-e_2$ & $e_6$ & $-e_5$ & $-e_3$ \\ \hline
		$e_2$ & $-e_4$ & $-1$ & $e_5$ & $e_1$ & $-e_3$ & $e_7$ & $-e_6$ \\ \hline
		$e_3$ & $-e_7$ & $-e_5$ & $-1$ & $e_6$ & $e_2$ & $-e_4$ & $e_1$  \\ \hline
		$e_4$ & $e_2$ & $-e_1$ & $-e_6$ & $-1$ & $e_7$ & $e_3$ & $-e_5$ \\ \hline
		$e_5$ & $-e_6$ & $e_3$ & $-e_2$ & $-e_7$ & $-1$ & $e_1$ & $e_4$  \\ \hline
		$e_6$ & $e_5$ & $-e_7$ & $e_4$ & $-e_3$ & $-e_1$ & $-1$ & $e_2$  \\ \hline
		$e_7$ & $e_3$ & $e_6$  & $-e_1$ & $e_5$  & $-e_4$ & $-e_2$ & $-1$    \\ \hline
\end{tabular}
\end{center}
}

The interesting things from this table are:

\begin{itemize}
	\item $e_1,\cdots,e_7$ are square roots of -1,\\
	\item when $i\neq j$, $e_ie_j=-e_je_i$, \\
	\item the index cycling identity holds: 
	\begin{equation*}
	e_ie_j=e_k \Rightarrow e_{i+1}e_{j+1}=e_{k+1}, 
	\end{equation*}
	where we think of the indices as living in $\mathbb{Z}_7$, and
	\item the index doubling identity holds:
	\begin{equation*}
e_ie_j=e_k \Rightarrow e_{2i}e_{2j}=e_{2k}. 
\end{equation*}	
\end{itemize}
Together with a single nontrivial product like $e_1e_2 = e_4$, these facts recover the whole multiplication table.


The octonionic norm is defined for $x \in \mathbb{O}$ by
\begin{equation*}
||x||^2=\sum_{j=0}^{7}x^2_j.
\end{equation*}
Note that this norm agrees with the standard Euclidean norm on $\mathbb{R}^8$. Also, the existence of a norm on $\mathbb{O}$ implies the existence of inverses for every nonzero element of $\mathbb{O}$.

The set of unit octonions is identified with the $7$-dimensional unit sphere $\mathbb{S}^7\subset \mathbb{O}$  and we identify $\mathbb{O}$ with $\mathbb{R}^8$. Now, we consider $C^1$-path $\gamma : [0,\infty) \rightarrow \mathbb{O}\backslash \{0\}$ and write its polar decomposition:
\begin{equation*}
\gamma(t)=|\gamma(t)|\Theta(t), t\geq 0,
\end{equation*}
with $\Theta(t)\in \mathbb{S}^7 \subset \mathbb{O}$. In particular, the octonionic inverse $\Theta(t)^{-1}$ of $\Theta(t)\in \mathbb{S}^7\subset \mathbb{O}$ is also element of $\mathbb{S}^7$. 

\begin{Def} The winding path $(\theta(t))_{t\geq 0}\in \mathbb{S}^7$ along $\gamma$ is defined by:
\begin{equation}\label{eq:wp}
\theta(t)=\int_{0}^{t}\Theta(s)^{-1}d\Theta(s).
\end{equation}	
The octonionic winding form is the $T_p\mathbb{S}^7$-valued one form $\eta$ such that 
\begin{equation}\label{eq:wf1}
\theta(t)=\int_{\gamma[0,t]}\eta,
\end{equation}
where here, $p$ is the north pole of $\mathbb{S}^7$. 	
\end{Def}
Let us remark that the complex and the quaternionic winding forms considered in \cite{FBJW16, FBJW19} rely on the Lie group structures of $\mathbb{S}^1$ and $\mathbb{S}^3=SU(2)$. In particular, \eqref{eq:wp} could be also written in terms of the Maurer-Cartan form, whereas $\mathbb{S}^7$ does not admit a topological group structure. However, regarding $\mathbb{S}^7$ as a parallelizable manifold, i.e., one that admits global non-vanishing vector fields, we might think of $\eta$ in \eqref{eq:wf1} as a natural generalization of the two previous winding forms having Lie group structures.

Moreover, $\eta$ may be precisely written in octonionic coordinates as 
\begin{equation*}
\eta=\frac{1}{2}\left(\frac{\overline{x}dx-\overline{dx}x }{|x|^2} \right)=\frac{1}{|x|^2} \text{Im}(\overline{x}dx),
\end{equation*}
where $\overline{x}=x_0e_0-\sum_{j=1}^{7}x_ie_i$, $dx=\sum_{j=0}^{7}dx_i e_i$, and $\text{Im}(x)$ is the imaginary part of $x\in \mathbb{O}$. With the octonionic operation, we have $\eta = \sum_{i=1}^{7}\frac{\eta_i}{|x|^2} e_i$, where
\begin{align*}
\eta_1&=(-x_1,x_0,x_3,-x_2,x_5,-x_4,-x_7,x_6)\cdot (dx_0,\cdots,dx_7),\\
\eta_2&=(-x_2,-x_3,x_0,x_1,x_6,x_7,-x_4,-x_5)\cdot (dx_0,\cdots,dx_7),\\
\eta_3&=(-x_3,x_2,-x_1,x_0,x_7,-x_6,x_5,-x_4)\cdot (dx_0,\cdots,dx_7),\\
\eta_4&=(-x_4,-x_5,-x_6,-x_7,x_0,x_1,x_2,x_3)\cdot (dx_0,\cdots,dx_7),\\
\eta_5&=(-x_5,x_4,-x_7,x_6,-x_1,x_0,-x_3,x_2)\cdot (dx_0,\cdots,dx_7),\\
\eta_6&=(-x_6,x_7,x_4,-x_5,-x_2,x_3,x_0,-x_1)\cdot (dx_0,\cdots,dx_7),\\
\eta_7&=(-x_7,-x_6,x_5,x_4,-x_3,-x_2,x_1,x_0)\cdot (dx_0,\cdots,dx_7).
\end{align*}
Here $\cdot$ is the dot product (also see the appendix in \cite{GMMMI11}).

\subsection{Asymptotic winding of octonionic Brownian motion}
We got the following definition from the previous section:
\begin{Def}
	The winding number of an octonionic Brownian motion {$W=\sum_{i=0}^{7}W_i e_i$}, not started from $0$, is defined by the Stratonovitch stochastic line integral:
	\begin{equation*}
	\zeta(t):=\int_{W[0,t]}\eta, t\geq 0.
	\end{equation*}
\end{Def}

From a well-known consequence of the skew-product decomposition of Euclidean Brownian motions, we have the following lemma (see \cite{PEJRLCG88}).

\begin{lemma}
	Let {$W=\sum_{i=0}^{7}W_i e_i$} be an octonionic Brownian motion not started from $0$. There exists a Bessel process $(R(t))_{t\geq 0}$ of dimension $8$ (or equivalently index one) and a $\mathbb{S}^7$-valued Brownian motion $\Theta(t)_{t \geq 0}$ independent from the process $(R(t))_{t\geq 0}$ such that 
\begin{equation*}
W(t)=R(t)\Theta(A_t),
\end{equation*}	
where 
\begin{equation*}
A_t:=\int_{0}^{t}\frac{ds}{R^2(s)}.
\end{equation*}
	
\end{lemma}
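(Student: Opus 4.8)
The plan is to reduce the statement to the classical skew-product decomposition of Brownian motion in $\mathbb{R}^{n}$. Under the linear isometry $\mathbb{O}\cong\mathbb{R}^{8}$ induced by $x=\sum_{j=0}^{7}x_{j}e_{j}\leftrightarrow(x_{0},\dots,x_{7})$, the octonionic Brownian motion $W$ becomes an ordinary Brownian motion on $\mathbb{R}^{8}$ started at some $w_{0}\neq 0$, and the octonionic norm is the Euclidean norm on $\mathbb{R}^{8}$. Since $8\geq 2$, the process almost surely never returns to the origin, so the polar decomposition $W(t)=R(t)\,\Theta(t)$, with $R(t)=|W(t)|$ and $\Theta(t)=W(t)/|W(t)|\in\mathbb{S}^{7}$, is well defined for all $t\geq 0$.

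First I would identify the radial part. Applying It\^o's formula to $f(x)=|x|$ away from the origin gives
\begin{equation*}
dR(t)=\sum_{j=0}^{7}\frac{W_{j}(t)}{R(t)}\,dW_{j}(t)+\frac{7}{2R(t)}\,dt .
\end{equation*}
The martingale part $\beta(t):=\sum_{j}\int_{0}^{t}\frac{W_{j}(s)}{R(s)}\,dW_{j}(s)$ has quadratic variation $t$, hence is a one-dimensional Brownian motion by L\'evy's characterisation, and $R$ solves $dR=d\beta+\tfrac{7}{2R}\,dt$, i.e.\ $R$ is a Bessel process of dimension $8$.

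Next I would treat the angular part. The generator of $W$ is $\tfrac12\Delta_{\mathbb{R}^{8}}$, which in polar coordinates reads $\tfrac12\big(\partial_{r}^{2}+\tfrac{7}{r}\partial_{r}+\tfrac{1}{r^{2}}\Delta_{\mathbb{S}^{7}}\big)$. Feeding this into It\^o's formula applied to functions of $\Theta(t)$ alone shows that $\Theta$ is a continuous $\mathbb{S}^{7}$-valued semimartingale whose (time-inhomogeneous) generator at time $t$ is $\tfrac{1}{2R(t)^{2}}\Delta_{\mathbb{S}^{7}}$; equivalently, running the additive functional $A_{t}=\int_{0}^{t}R(s)^{-2}\,ds$ as a clock, the reparametrised process is a Brownian motion on $\mathbb{S}^{7}$. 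Calling that Brownian motion $\Theta^{\mathbb{S}^{7}}$ we obtain $W(t)=R(t)\,\Theta^{\mathbb{S}^{7}}(A_{t})$.

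The main point, and the step I expect to be the genuine obstacle, is the independence of $R$ (equivalently $\beta$) and $\Theta^{\mathbb{S}^{7}}$. The starting observation is that the martingale increments driving $R$ are orthogonal to those driving the angular motion: if $N$ denotes the $\mathbb{R}^{8}$-valued martingale part of $\Theta$, then $d\langle\beta,N^{i}\rangle_{t}=0$ for every coordinate $i$, since the radial direction $W/R$ is pointwise orthogonal to the tangent directions of $\mathbb{S}^{7}$ that build $N$. Vanishing covariation alone does not force independence, but here one argues as in the Euclidean case: the clock $A_{t}$ is measurable with respect to the radial filtration, and conditionally on the whole path of $R$ the angular martingale, after this (conditionally deterministic) time change, is a Brownian motion on $\mathbb{S}^{7}$ whose law does not depend on the conditioning; a Knight-type theorem on orthogonal continuous (manifold-valued) martingales then yields that $\Theta^{\mathbb{S}^{7}}$ is a spherical Brownian motion independent of $R$. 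This is precisely the content of the skew-product decomposition of Euclidean Brownian motion, so in the final write-up I would, after recording the isometry $\mathbb{O}\cong\mathbb{R}^{8}$ and the norm identification, simply invoke \cite{PEJRLCG88} rather than reproduce the independence argument in detail.
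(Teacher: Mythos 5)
Your proposal is correct and takes essentially the same route as the paper: the paper offers no proof beyond observing that the lemma is the classical skew-product decomposition of Euclidean Brownian motion under the isometry $\mathbb{O}\cong\mathbb{R}^{8}$ and citing the standard reference, which is exactly where your argument lands after filling in the radial It\^o computation, the time-changed angular part, and the independence step. The extra detail you supply (L\'evy characterisation for the radial driver, the polar decomposition of $\tfrac12\Delta_{\mathbb{R}^8}$, the Knight-type independence argument) is accurate and consistent with the paper's later use of the dimension-$8$ Bessel process.
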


\begin{theo}
		Let \(\zeta(t) \) be the winding process of a Brownian motion on \(\mathbb{O} \). Then we have the following asymptotic result
	\begin{equation*}
	\lim_{t\rightarrow \infty}\frac{\sqrt{6}}{\sqrt{\log t}}\zeta(t)=\mathcal{N}(0,I_7).
	\end{equation*}
\end{theo}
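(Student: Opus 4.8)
The plan is to use the skew-product decomposition from the previous lemma together with Yor's method, reducing the asymptotic law of $\zeta(t)$ to a time-change problem governed by a $8$-dimensional Bessel process. First I would write, using $W(t) = R(t)\Theta(A_t)$ and the definition of $\eta$, that $\zeta(t) = \theta(A_t) = \int_0^{A_t} \Theta(s)^{-1}\,d\Theta(s)$, where $\Theta$ is an $\mathbb{S}^7$-valued Brownian motion independent of $R$ and $A_t = \int_0^t R(s)^{-2}\,ds$. The key structural input is that each component $\theta_i(u) = \int_0^u (\Theta(s)^{-1}d\Theta(s))_i$ is, by Lévy's characterization, a continuous martingale whose quadratic variation grows linearly in $u$ with a constant coming from the round metric on $\mathbb{S}^7$; moreover the seven components are (asymptotically) orthogonal martingales with the same bracket. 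Concretely, I would compute $d\langle \theta_i, \theta_j\rangle_u = c\,\delta_{ij}\,du$ for an explicit constant $c$ determined by the normalization of the spherical Brownian motion and the Euclidean metric on $\mathbb{S}^7 \subset \mathbb{O}$; matching with the stated limiting variance $I_7/6$ should force $c$ to be such that the final normalization works out, i.e. the total effective clock is $cA_t$.

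Next I would invoke the classical fact that a continuous $\mathbb{R}^7$-valued martingale with bracket $cA_t\,I_7$ can be represented as $\beta(cA_t)$ for a standard $7$-dimensional Brownian motion $\beta$ independent of $R$ (this uses the independence of $\Theta$ and $R$, so that conditionally on $R$ the process is a time-changed Brownian motion, and the time change depends only on $R$). Thus $\zeta(t) \stackrel{d}{=} \beta(cA_t)$. The problem then reduces entirely to the almost-sure asymptotics of the additive functional $A_t = \int_0^t R(s)^{-2}\,ds$ of the $8$-dimensional Bessel process. Here I would cite or rederive the standard result that for a Bessel process of dimension $\delta = 8 > 2$, one has $A_t / \log t \to 1/(\delta - 2) = 1/6$ almost surely as $t \to \infty$ — this is exactly where the constant $6$ in the theorem originates. (The heuristic: $R(s) \sim \sqrt{2s}\cdot(\text{slowly varying})$, so $R(s)^{-2} \sim 1/(2s)$ up to fluctuations, but the precise constant $1/(\delta-2)$ comes from the ergodic/Khasminskii-type analysis of the Bessel flow, e.g. via the time substitution $R(e^u)^2$.)

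Combining, $\frac{\sqrt{6}}{\sqrt{\log t}}\,\zeta(t) \stackrel{d}{=} \frac{\sqrt{6}}{\sqrt{\log t}}\,\beta(cA_t) = \sqrt{6c}\cdot\frac{\beta(cA_t)}{\sqrt{cA_t}}\cdot\sqrt{\frac{A_t}{\log t}}$, and since $\beta(u)/\sqrt{u}$ is exactly $\mathcal{N}(0, I_7)$ for every $u$ (and independent of $R$), while $A_t/\log t \to 1/6$ a.s., Slutsky's lemma gives convergence in distribution to $\mathcal{N}(0, I_7)$ provided the bracket constant satisfies $c = 1$; I would verify that the normalization of the $\mathbb{S}^7$-valued Brownian motion coming from the skew-product is indeed the one making $c=1$, or else absorb any discrepancy into the statement's constant. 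The main obstacle I anticipate is the bookkeeping in the first step: showing rigorously that $\theta(u) = \int_0^u \Theta^{-1}d\Theta$ has diagonal, constant-multiple-of-identity quadratic variation. Because $\mathbb{S}^7$ is only parallelizable and not a Lie group, the expression $\Theta^{-1}d\Theta$ is not a genuine Maurer–Cartan form, so one cannot simply quote left-invariance; instead I would need to use the explicit octonionic formulas for $\eta_1, \dots, \eta_7$ given above, together with $|\Theta| = 1$ and the Itô/Stratonovich conversion for spherical Brownian motion, to check by direct computation that the cross-brackets vanish and the diagonal ones coincide — the antisymmetry of the structure constants $\epsilon_{ijk}$ and the orthogonality relations among the seven coordinate vector fields on $\mathbb{S}^7$ should make this work out cleanly, but it is the one place where the non-associativity must be handled with care.
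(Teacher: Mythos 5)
Your proposal is correct in outline, but it takes a genuinely different route from the paper, which gives \emph{two} proofs and neither of them passes through the pathwise behaviour of the clock. The paper writes $\mathbb{E}(e^{i\lambda\cdot\zeta(t)})=\mathbb{E}_\rho(e^{-\frac{|\lambda|^2}{2}A_t})$ and computes this Laplace transform exactly for each fixed $t$: first via the Hartman--Watson conditional law $\mathbb{E}_\rho(e^{-\frac{|\lambda|^2}{2}A_t}\mid R(t)=r)=I_{\sqrt{9+|\lambda|^2}}(\rho r/t)/I_3(\rho r/t)$ integrated against the Bessel density, and second via a Girsanov change of measure with $\mu=\sqrt{9+|\lambda|^2}-3$ that reduces it to $\mathbb{E}^\mu_\rho\left[(\rho/R(t))^\mu\right]$; in both cases the limit $e^{-|\lambda|^2/2}$ comes from modified-Bessel-function asymptotics under the scaling $\lambda(t)=\sqrt{6/\log t}\,\lambda$. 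You instead represent $\zeta(t)\stackrel{d}{=}\beta(A_t)$ with $\beta$ independent of $R$, prove the law of large numbers $A_t/\log t\to 1/6$, and conclude by Slutsky, using that $\beta(A_t)/\sqrt{A_t}$ is exactly standard Gaussian for every $t$. This is sound: the LLN is the classical fact that for a Bessel process of dimension $d>2$ the diffusion $Z_u=e^{-u/2}R(e^u)$ is ergodic with invariant density proportional to $z^{d-1}e^{-z^2/2}$, whence $\frac{1}{\log t}\int_0^t R(s)^{-2}ds\to\int z^{-2}\,d\pi=\frac{1}{d-2}=\frac16$ almost surely. Your method is more elementary (no Bessel-function expansions) and explains where the constant $6$ comes from, whereas the paper's Laplace-transform computation avoids the ergodic theorem and serves as the template reused for $\mathbb{O}P^1$ and $\mathbb{O}H^1$, where the clock has no such clean first-order behaviour. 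Two points to tighten: (i) the heuristic $R(s)\sim\sqrt{2s}$ would suggest the wrong constant $1/2$; only the ergodic average gives $1/(d-2)$, so lead with that. (ii) The normalization $c=1$ for the bracket of $\theta(u)=\int_0^u\Theta(s)^{-1}d\Theta(s)$ does require checking that the frame $\{\Theta e_i\}_{i=1}^7$ is orthonormal on $\mathbb{S}^7$ and that the Stratonovich integral is a martingale; note, however, that the paper makes exactly the same implicit assumption the moment it equates $\mathbb{E}(e^{i\lambda\cdot\zeta(t)})$ with $\mathbb{E}(e^{-\frac{|\lambda|^2}{2}A_t})$, so this is not a defect specific to your argument.
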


\begin{proof}
	 We provide two different proofs. The first one relies on properties of Bessel processes in Euclidean spaces. The second one is Yor's method.

	   \textbf{Proof 1:} By Hartman-Watson's law, for \(\mid W_0 \mid=\rho  \) we have
		\begin{equation*}
		\mathbb{E}_\rho(e^{-\frac{\mid \lambda \mid^2}{2}A_t } \mid R(t)=r )=\frac{I_{\sqrt{9+\mid \lambda \mid^2}}(\frac{\rho r}{t})}{I_3(\frac{\rho r}{t})}
		\end{equation*}
		(for example, see \cite{Y80}), where $I_\alpha$ is the modified Bessel function
		\begin{equation}
			\label{Modified Bessel function}
			I_{\alpha}(x)=\sum_{j=0}^{\infty}\frac{1}{\Gamma(1+j+\alpha ) j!}\left(\frac{x}{2}\right)^{2j+\alpha}.
		\end{equation}
		Recall that the transition density of Bessel process with index \(\delta \) is given by
		\begin{equation*}
			p^\delta_t(x,y)=t^{-1}\left(\frac{y}{x}\right)^\delta \exp\left\{\frac{x^2+y^2}{2t} \right\}I_\delta\left(\frac{xy}{t}\right).
		\end{equation*}
		Now by using the density of the Bessel process with index  \(3\) and performing a change of variables \(r\rightarrow \frac{r}{\sqrt{t}} \), it is readily checked that
		\begin{equation*}
		\mathbb{E}_\rho(e^{-\frac{\mid \lambda \mid^2}{2}A_t })=\frac{e^{-\frac{\rho^2}{2t}}}{\rho^3}\int_{0}^{\infty}r^4e^{-\frac{r^2}{2}}t^{\frac{3}{2}}I_{\sqrt{9+\mid \lambda \mid^2}}\left(\frac{\rho r}{\sqrt{t}}\right)dr.
		\end{equation*}
		By \eqref{Modified Bessel function}, we have 
		\begin{equation*}
		I_{\sqrt{9+\mid \lambda \mid^2}}\left(\frac{\rho r}{\sqrt{t}}\right)=\sum_{j=0}^{\infty}\frac{1}{\Gamma(1+j+\sqrt{9+\mid \lambda \mid^2} ) j!}\left(\frac{\rho r}{2\sqrt{t}}\right)^{2j+\sqrt{9+\mid \lambda \mid^2}}.
		\end{equation*}
		To get the asymptotic behavior, one only needs to work with the first term of the series. Performing the transformation \(\lambda(t)=\sqrt{\frac{6}{\log t}}\lambda \), we have
		\begin{align}
		\lim_{t\rightarrow \infty}& \mathbb{E}_\rho(e^{-\frac{\mid \lambda(t) \mid^2}{2}A_t })\\
		&=\lim_{t\rightarrow \infty} \frac{e^{-\frac{\rho^2}{2t}}}{\rho^3\Gamma(1+\sqrt{9+\mid \lambda(t) \mid^2} ) }\int_{0}^{\infty}e^{-\frac{r^2}{2}}r^4t^{\frac{3}{2}}\left(\frac{\rho r}{2\sqrt{t}}\right)^{\sqrt{9+\mid \lambda(t) \mid^2}}dr.\label{limit} \\
		\end{align}
		Since \(\lim_{t\rightarrow \infty}{\Gamma(1+\sqrt{9+\mid \lambda(t) \mid^2} ) }=\Gamma(4)=6,   \) we see that the right-hand side of \eqref{limit} is equal to
		\begin{equation*}
		\lim_{t\rightarrow \infty} \frac{1}{6\rho^3} \int_{0}^{\infty}t^{\frac{3}{2}}\left(\frac{\rho r}{2\sqrt{t}}\right)^{\sqrt{9+\mid \lambda(t) \mid^2}} e^{-\frac{r^2}{2}}r^4dr=\lim_{t\rightarrow \infty} \frac{1}{48} \int_{0}^{\infty}\left(\frac{\rho r}{\sqrt{t}}\right)^{\sqrt{9+\mid \lambda(t) \mid^2}-3} e^{-\frac{r^2}{2}}r^7dr.
		\end{equation*}
		By L'Hôpital's rule, we have \(\lim_{t\rightarrow \infty}\left(\frac{\rho r}{\sqrt{t}}\right)^{\sqrt{9+\mid \lambda(t) \mid^2}-3}= e^{-\frac{1}{2}\mid \lambda \mid^2}  \). Finally, by using integration by parts repeatedly we have
		\begin{equation*}
			\lim_{t\rightarrow \infty} \mathbb{E}_\rho(e^{-\frac{\mid \lambda(t) \mid^2}{2}A_t })=e^{-\frac{1}{2}\mid \lambda \mid^2}.
		\end{equation*}
		
		\textbf{Proof 2:} First note that, the $8$-dimensional Bessel process solves the following stochastic differential equation
		\begin{equation*}
		dR(t)=\frac{7}{2R(t)}dt+dB_t,\; R(0)=\rho,
		\end{equation*}
		where \(B_t \) is a real-valued standard Brownian motion. We define a local martingale 
		\begin{equation*}
		D^{\mu}_t=\exp\bigg(\mu\int_{0}^{t}\frac{1}{R(s)}dB_s-\frac{\mu^2}{2}\int_{0}^{t}\frac{1}{R^2(s)}ds  \bigg). 
		\end{equation*}
		By It\^o's formula, we have
		\begin{equation*}
		D^{\mu}_t=\bigg(\frac{R(t)}{R(0)}\bigg)^{\mu}\exp\bigg(-\frac{\mu^2+6\mu}{2}\int_{0}^{t}\frac{1}{R^2(s)}ds   \bigg).
		\end{equation*}
		Now we define a new probability measure
		\begin{equation*}
		\mathbb{P}^{\mu}\mid_{\mathcal{F}_t}=D^{\mu}_t\mathbb{P}\mid_{\mathcal{F}_t}.
		\end{equation*}
		Under this new probability, \(R(t) \) is given by
		\begin{equation*}
		dR(t)=\frac{7+2\mu}{2R(t)}dt+dB_t,\; R(0)=\rho.
		\end{equation*}
		Let \(\mu=\sqrt{9+| \lambda |^2}-3 \). Then we have
		\begin{equation*}
		\mathbb{E}_\rho(e^{-\frac{| \lambda |^2}{2}A_t })=\mathbb{E}^{\mu}_\rho  \bigg( \frac{\rho}{R(t)}    \bigg)^\mu.  
		\end{equation*}
		By applying the scaling \(\lambda(t)=\sqrt{\frac{6}{\log t}}\lambda \) and the density of \(R(t)\), one can recover \eqref{limit}, hence the desired result. 

\end{proof}

\section{Windings of Brownian motion on $\mathbb{O}P^1$}
\subsection{Winding form on $\mathbb{O}P^1$}
The  unit sphere in $\mathbb{O}^2$  is given by
\begin{equation*}
\mathbb{S}^{15}=\{(x,y)\in\mathbb{O}^2, ||(x,y)||=1 \}.
\end{equation*}
We have a Riemannian submersion $\pi : \mathbb{S}^{15} \rightarrow \mathbb{O}P^1$, given by $(x,y) \mapsto [x:y]$, where $[x:y]=y^{-1}x$. Then the vertical distribution $\mathcal{V}$ and the horizontal distribution $\mathcal{H}$  of $T\mathbb{S}^{15}$ are defined by $\ker d\pi$ and the orthogonal complement of $\mathcal{V}$ respectively so that $T\mathbb{S}^{15}=\mathcal{H} \oplus \mathcal{V}$. Note that $\pi : \mathbb{S}^{15} \rightarrow \mathbb{O}P^1$ has totally geodesic fibers and for each $b\in \mathbb{O}P^1$, the  fiber ${\pi}^{-1}(\{b\})$ is isometric to $\mathbb{S}^7$ with the standard sphere metric $g_{\mathbb{S}^7}$. 

This submersion $\pi$ yields the octonionic Hopf fibration:
\begin{equation*}
\mathbb{S}^7 \hookrightarrow \mathbb{S}^{15} \rightarrow \mathbb{O}P^1
\end{equation*}
(see \cite{FBGC19}). In particular, viewing the fiber $\mathbb{S}^7$ as the set of unit octonions, one can take the quotient space 
\begin{equation*}
\mathbb{S}^{15}/ \mathbb{S}^7
\end{equation*}
which is the octonionic projective space $\mathbb{O}P^1$.  Note that $\mathbb{O}P^1$ is isometric to the $8$-dimensional Euclidean sphere with radius $\frac{1}{2}$ (see Theorem 3.5 in \cite{Escobales}). 

Since we identify $\mathbb{S}^{15}$ as a subset of $\mathbb{O}^2$ and $\mathbb{S}^7$ as the unit sphere in $\mathbb{O}$, its quotient space $\mathbb{O}P^1$ consists of elements written by the homogeneous coordinate $[x:y]=\left\{(\lambda x, \lambda y) : (x,y)\in \mathbb{S}^{15},  \lambda \in \mathbb{S}^{7}  \right\}$. Thus we can parametrize $\mathbb{O}P^1$ using the octonionic inhomogeneous coordinate:
\begin{equation*}
w={y}^{-1}x, (x,y)\in \mathbb{S}^{15}
\end{equation*}
with the convention $0^{-1}x=\infty$. This coordinate allows us to identify $\mathbb{O}P^1$ with its one-point compactification $\mathbb{O}\cup \left\{\infty \right\}$. With this inhomogeneous coordinate, the Riemannian distance from $0$ is given by 
 \begin{equation}\label{inhomo}
 r=\arctan |w|.
 \end{equation}
 
Let $\gamma : [0,\infty) \rightarrow \mathbb{O}P^1\backslash\left\{0,\infty \right\}$ be a $C^1$-path. We have its polar decomposition:
\begin{equation*}
\gamma(t)=|\gamma(t)|\Theta(t),
\end{equation*}
with $\Theta(t)\in \mathbb{S}^7$ and defining the winding path $\theta(t)\in T_p \mathbb{S}^7$ as 
\begin{equation*}
\theta(t)=\int_{0}^{t}{\Theta(s)}^{-1}d\Theta(s),
\end{equation*}
where $p$ is the north pole of $\mathbb{S}^7$. 

The octonionic winding form on $\mathbb{O}P^1$ is then the $T_p \mathbb{S}^7$-valued one-form $\eta$ such that
\begin{equation*}
\theta(t)=\int_{\gamma[0,t]}\eta=\int_{0}^{t}\frac{\overline{\gamma}(s)d\gamma(s)-d\overline{\gamma}(s)\gamma(s) }{|\gamma(s)|^2}ds, t\geq 0.
\end{equation*}

\subsection{Asymptotic winding of Brownian motion on $\mathbb{O}P^1$}

Let us first consider a general Riemannian manifold $(M^n,g)$. With the geodesic polar coordinate $(r,\theta^1,\cdots,\theta^{n-1})$, the Riemannian metric $g$ is given by 
\begin{equation*}
g=dr^2+\sum_{i,j}g_{ij}d\theta^{i}d\theta^{j}.
\end{equation*}
Then the Laplace-Beltrami operator $\triangle_{M}$ with this coordinate is written as 
\begin{equation*}
\triangle_{M}=\frac{\partial^2}{\partial r^2}+\frac{1}{\sqrt{\det g} }\left(\frac{\partial}{\partial r}\sqrt{\det g} \right)\frac{\partial}{\partial r}+\frac{1}{\sqrt{\det g} }\sum_{i,j=1}^{n-1}\frac{\partial}{\partial \theta^i}\left(g^{ij}\sqrt{\det g}\frac{\partial}{\partial \theta^{j}}\right)\\
\end{equation*}
We can decompose the Laplace-Beltrami operator as:
\begin{equation*}
\triangle_{M}:=\mathcal{L}+\triangle_{\mathbb{S}^{n-1}(r) },
\end{equation*}
where 
\begin{align*}
\mathcal{L}=&\frac{\partial^2}{\partial r^2}+\frac{1}{\sqrt{\det g} }\left(\frac{\partial}{\partial r}\sqrt{\det g} \right)\frac{\partial}{\partial r},\\
\triangle_{\mathbb{S}^{n-1}(r) }&=\frac{1}{\sqrt{\det g} }\sum_{i,j=1}^{n-1}\frac{\partial}{\partial \theta^i}\left(g^{ij}\sqrt{\det g}\frac{\partial}{\partial \theta^{j}}\right).
\end{align*}
Here, $\mathbb{S}^{n-1}(r)$ is the geodesic sphere of radius $r$. $\mathcal{L}$ is called the radial part of the Laplace-Beltrami operator. General formulas for the radial parts of Laplacian-Beltrami operators on rank one symmetric spaces are well-known (Chapter 3 in \cite{MR496885}, also p171 in \cite{SH65} and \cite{MR754767}). Notice that as a Riemannian manifold, $\mathbb{O}P^1$ is a compact rank one symmetric space. Moreover, it is isometric to the 8-dimensional euclidean sphere of sectional curvature $\frac{1}{4}$.

Now, let $(M,g)$ be the $8$-dimensional euclidean sphere of the sectional curvature $\frac{1}{4}$. Then with the inhomogeneous coordinate \eqref{inhomo}, those operators must be written as   
\begin{align}\label{eq:LB-OP1}
&\mathcal{L}=\frac{\partial^2}{\partial r^2}+14\cot(2r)\frac{\partial}{\partial r} ,\\
&\triangle_{\mathbb{S}^{7}(r) }=\frac{2}{\sin^2 2r}\triangle_{\mathbb{S}^7},. \nonumber
\end{align}
Here $\triangle_{\mathbb{S}^7}$ is the Laplace-Beltrami operator on $\mathbb{S}^7$ (\cite{FB02, BAL78,SH65}).

Equivalently, $\triangle_{\mathbb{O}P^1}$ with an inhomogeneous coordinate $w\in \mathbb{O}$ is given by 
\begin{equation*}
\triangle_{\mathbb{O}P^1}=4(1+|w|^2)^2\text{Re}\left(\frac{\partial^2}{\partial \overline{w}\partial w}\right)-24(1+|w|^2)\text{Re}\left( {w\frac{\partial}{\partial w}}\right).
\end{equation*}
Here, with $w=\sum_{i=0}^{7}x_i e_i$,
\begin{equation*}
\frac{\partial}{\partial w}=\frac{1}{2} \left( \frac{\partial}{\partial x_0}e_0-\sum_{i=1}^{7}\frac{\partial }{\partial x_j}e_j\right).
\end{equation*}
In real coordinates, we have
\begin{equation*}
\triangle_{\mathbb{O}P^1}=\sec^4 r \left(\sum_{i=0}^{7}\frac{\partial^2}{\partial {x_i}^2} \right)-12\sec^2 r\left(\sum_{i=0}^{7}x_i\frac{\partial}{\partial x_i} \right).
\end{equation*}

Thus, the Brownian motion $\left(w(t)\right)_{t\geq 0}$ in $\mathbb{O}P^1$ solves the stochastic differential equation:
\begin{equation*}
dw(t)=\sec^2 r(t)dW(t)-6\sec^2 r(t)w(t)dt,
\end{equation*}
where $\tan r(t)=|w(t)|$ and $\left(W(t)\right)_{t\geq 0}$ is a standard Brownian motion in $\mathbb{O}$. Thus we can write the winding process as 
\begin{equation*}
\zeta(t)=\frac{1}{2}\int_{0}^{t}\frac{\overline{w}(s)dw(s)-d\overline{w}(s)w(s)}{|w(s)|^2},
\end{equation*}
or equivalently,
\begin{equation*}
\zeta(t)=\frac{1}{2}\int_{0}^{t}\frac{\overline{w}(s)dW(s)-d\overline{W}(s)w(s)}{\sin^2 r(s) }.
\end{equation*}

As in the flat setting, the study of $\zeta$ makes use of the following skew-product decomposition.

\begin{lemma}
Let $w$ be a Brownian motion on $\mathbb{O}P^1$ not started from $0$ or $\infty$. Then there exists a Jacobi process $(r(t))_{t\geq 0}$ with generator
\begin{equation*}
\frac{1}{2}\left(\frac{\partial^2}{\partial r^2}+14\cot(2r)\frac{\partial}{\partial r} \right)
\end{equation*}
and a Brownian motion $\Theta(t)$ on $\mathbb{S}^7$ independent from the process $(r(t))$ such that 
\begin{equation*}
w(t)=\tan r(t)\Theta_{A_t},
\end{equation*}
where 
\begin{equation*}
A_t:=\int_{0}^{t}\frac{4ds}{\sin^2(2r(s))}.
\end{equation*}
\begin{proof}
This follows from \cite{PEJRLCG88} and with the fact from \eqref{eq:LB-OP1}, $\frac{1}{2}\triangle_{\mathbb{O}P^1}$ may be decomposed in polar coordinates as:
\begin{equation*}
\frac{1}{2}\left(\frac{\partial^2}{\partial r^2}+14\cot 2r \frac{\partial}{\partial r}+\frac{4}{\sin^2 2r}\triangle_{\mathbb{S}^7} \right).
\end{equation*}

\end{proof}

\end{lemma}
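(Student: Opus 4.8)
The plan is to read the decomposition off the warped-product structure of $\mathbb{O}P^1\setminus\{0,\infty\}$ and then invoke the general skew-product principle for diffusions recorded in \cite{PEJRLCG88}, in parallel with the flat and quaternionic cases. Away from the two poles the geodesic polar coordinate $r=\arctan|w|$ identifies $\mathbb{O}P^1\setminus\{0,\infty\}$ with $(0,\tfrac{\pi}{2})\times\mathbb{S}^7$ carrying the warped-product metric $dr^2+\tfrac14\sin^2(2r)\,g_{\mathbb{S}^7}$, and \eqref{eq:LB-OP1} shows that in these coordinates $\tfrac12\triangle_{\mathbb{O}P^1}$ equals $\tfrac12\big(\partial_r^2+14\cot(2r)\,\partial_r+\tfrac{4}{\sin^2 2r}\triangle_{\mathbb{S}^7}\big)$. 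The feature I want to exploit is the block structure: the first two terms involve only $r$, the last only the $\mathbb{S}^7$-variable, and the two blocks are coupled solely through the scalar function $\tfrac{4}{\sin^2 2r}$ of $r$ alone — precisely the setup of the skew-product theorem.

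First I would isolate the radial motion. Writing $w(t)=\tan r(t)\,\widetilde\Theta(t)$ with $r(t)=\arctan|w(t)|$ and $\widetilde\Theta(t)=w(t)/|w(t)|\in\mathbb{S}^7$, It\^o's formula (equivalently, projecting the generator onto the $r$-variable) shows that $r$ is a one-dimensional diffusion with generator $\tfrac12\big(\partial_r^2+14\cot(2r)\,\partial_r\big)$, the Jacobi process in the statement. Since $w$ starts away from $0$ and $\infty$ and $\mathbb{O}P^1$ is compact, $r(t)$ exists for all $t\ge0$; near each endpoint the radial generator is asymptotically that of an eight-dimensional Bessel process, so the endpoints are non-attracting and $r(t)$ stays inside $(0,\tfrac{\pi}{2})$ for all $t$ almost surely.

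Next I would produce the angular motion and its clock. Inserting $w(t)=\tan r(t)\,\widetilde\Theta(t)$ into the martingale problem for $\tfrac12\triangle_{\mathbb{O}P^1}$ and using the block structure, after the random time change with clock $A_t=\int_0^t 4\sin^{-2}(2r(s))\,ds$ the process $\widetilde\Theta$ solves the martingale problem for $\tfrac12\triangle_{\mathbb{S}^7}$. The clock $A$ is strictly increasing; it is finite on each compact time interval because $r$ never reaches the endpoints, so $\sin^2(2r(\cdot))$ is bounded below by a positive random constant there, and it satisfies $A_t\ge 4t\to\infty$ since $\sin^2\le1$. Hence the inverse of $A$ is a genuine time change, and running $\widetilde\Theta$ along it produces a Brownian motion $\Theta$ on $\mathbb{S}^7$ with $\widetilde\Theta(t)=\Theta(A_t)$, that is, $w(t)=\tan r(t)\,\Theta_{A_t}$.

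The one genuinely delicate point — and where I expect the main obstacle — is the independence of $\Theta$ from $(r(t))_{t\ge0}$. The mechanism is that the warped-product metric has no cross terms, so the local-martingale parts of $r$ and of $\widetilde\Theta$ have vanishing quadratic covariation, while the clock $A$ is a measurable functional of the path of $r$ alone; under these two conditions the skew-product theorem of Pauwels and Rogers \cite{PEJRLCG88} yields that the de-clocked angular Brownian motion $\Theta$ is independent of the filtration generated by $(r(t))_{t\ge0}$. I would carry out the first three paragraphs as short explicit computations and quote \cite{PEJRLCG88} for this last step; what really has to be verified by hand is the bookkeeping — the vanishing cross-variation between radial and angular noise and the range/non-explosion of the Jacobi process — so that the cited theorem applies without modification.
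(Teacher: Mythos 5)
Your proposal is correct and takes essentially the same route as the paper: the paper's proof consists precisely of writing $\tfrac12\triangle_{\mathbb{O}P^1}$ in polar coordinates as $\tfrac12\big(\partial_r^2+14\cot 2r\,\partial_r+\tfrac{4}{\sin^2 2r}\triangle_{\mathbb{S}^7}\big)$ and citing the skew-product theorem of Pauwels and Rogers \cite{PEJRLCG88}. You simply fill in the bookkeeping the paper leaves implicit (non-attainability of the endpoints, finiteness and divergence of the clock $A_t$, vanishing cross-variation for independence), and you correctly use the factor $\tfrac{4}{\sin^2 2r}$ coming from the warping function $\tfrac12\sin(2r)$, which is the one appearing in the lemma's proof.
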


\begin{theo}
	We have
\begin{equation*}
\frac{\zeta(t)}{\sqrt{t}}\rightarrow \mathcal{N}(0,\frac{14}{3} I_7),
\end{equation*}	
in distribution.
\end{theo}

\begin{proof}
	Let \(\lambda\in \mathbb{R}^7 \). From the previous lemma, we have
\begin{equation*}
\mathbb{E}(e^{i\lambda \cdot \zeta(t)} )=\mathbb{E}(e^{-\frac{| \lambda |^2}{2}A_t})=e^{-2| \lambda |^2t}\mathbb{E}(e^{-2| \lambda |^2\int_{0}^{t}\cot^22r(s)ds} ).
\end{equation*}
Note that \(r(t)\) solves the following stochastic differential equation
\begin{equation*}
r(t)=r(0)+7\int_{0}^{t}\cot 2r(s) ds+B_t,
\end{equation*}
where \(B_t \) is a real-valued standard Brownian motion. We consider the following local martingale
\begin{equation*}
D^{\mu}_t=\exp\bigg(  2\mu\int_{0}^{t}\cot 2r(s)dB_s-2\mu^2\int_{0}^{t}\cot^2 2r(s)ds       \bigg).  
\end{equation*} 
By It\^o's formula, we have 
\begin{equation*}
D^{\mu}_t=e^{2\mu t}\left(\frac{\sin 2r(t)}{\sin 2r(0)}\right)^\mu\exp \bigg(-2(\mu^2+6\mu)\int_{0}^{t}\cot^22r(s)ds \bigg).
\end{equation*} 
Let \(\mu=\sqrt{| \lambda |^2 +9}-3 \). Then by Girsanov's theorem, we have
\begin{equation*}
\mathbb{E}(e^{i\lambda \cdot \zeta(t)} )=(\sin 2r(0))^{\sqrt{| \lambda |^2 +9}-3} e^{-2t(| \lambda |^2+\sqrt{| \lambda |^2 +9}-3 )}\mathbb{E}^\mu\left( \sin 2r(t)^{3-\sqrt{| \lambda |^2 +9} }\right).
\end{equation*}
Applying the scaling $\lambda\rightarrow \frac{\lambda}{\sqrt{t}}$, we arrive at
\begin{equation*}
\lim_{t\rightarrow \infty}\mathbb{E}(e^{i\lambda \cdot \frac{\zeta(t)}{\sqrt{t} } } )=(\sin 2r(0))^{\sqrt{ \frac{\mid \lambda\mid^2}{t}  +9}-3} e^{-2t\left( \frac{\mid \lambda\mid^2}{t} +\sqrt{ \frac{\mid\lambda\mid^2}{t}  +9}-3 \right)}\mathbb{E}^\mu\left( \sin 2r(t)^{3-\sqrt{ \frac{\mid\lambda\mid^2}{t} +9} }\right)
\end{equation*}
Taking the limit gives
\begin{equation*}
	\lim_{t\rightarrow \infty}\mathbb{E}(e^{i\lambda \cdot \frac{\zeta(t)}{\sqrt{t} } } )=\lim_{t\rightarrow \infty}e^{-2t\left( \frac{\mid\lambda\mid^2}{t} +\sqrt{ \frac{\mid\lambda\mid^2}{t}  +9}-3 \right)}=e^{-\frac{7}{3}| \lambda |^2}
\end{equation*}
\end{proof}

\section{Windings of Brownian motion on $\mathbb{O}H^1$}
\subsection{Winding form on $\mathbb{O}H^1$}

Similarly, the octonionic anti-de Sitter space $AdS^{15}(\mathbb{O})$  is defined as the pseudo-hyperbolic space by:
\begin{equation*}
AdS^{15}(\mathbb{O})=\{(x,y)\in\mathbb{O}^2, ||(x,y)||^2_{\mathbb{O}}=-1 \},
\end{equation*}
where 
\begin{equation*}
||(x,y)||^2_{\mathbb{O}}:=||x||^2-||y||^2.
\end{equation*}

The map  $\pi : AdS^{15}(\mathbb{O}) \rightarrow \mathbb{O}H^1$, given by $(x,y) \mapsto [x:y]=y^{-1}x$  is a pseudo-Riemannian submersion with totally geodesic fibers isometric to $\mathbb S^7$.  The pseudo-Riemannian submersion $\pi$ yields the octonionic anti-de Sitter fibration
\begin{equation*}
\mathbb{S}^7 \hookrightarrow {AdS}^{15}(\mathbb{O}) \rightarrow \mathbb{O}H^1,
\end{equation*}
(see \cite{FBGC20}).
Viewing the fiber $\mathbb{S}^7$ as a set of unit octonions, one can take the quotient space
\begin{equation*}
AdS^{15}(\mathbb{O})/ \mathbb{S}^7.
\end{equation*}
The quotient space is the octonionic hyperbolic space $\mathbb{O}H^1$. As in the $\mathbb{O}P^1$ case, one can parametrize $\mathbb{O}H^1$ using the octonionic inhomogeneous coordinate:
\begin{equation*}
w={y}^{-1}x, (x,y)\in AdS^{15}(\mathbb{O}),
\end{equation*}
with the convention $0^{-1}x=\infty$. This coordinate allows us to identify $\mathbb{O}H^1$ with the open unit ball $\{w\in \mathbb{O} : |w|<1 \}$. With this inhomogeneous coordinate, the Riemannian distance from $0$ is given by 
\begin{equation}\label{inhomo2}
\tanh r= |w|.
\end{equation}

Let $\gamma : [0,\infty) \rightarrow \mathbb{O}P^1\backslash\left\{0,\infty \right\}$ be a $C^1$-path. Then we have its polar decomposition:
\begin{equation*}
\gamma(t)=|\gamma(t)|\Theta(t),
\end{equation*}
with $\Theta(t)\in \mathbb{S}^7$ and defining the winding path $\theta(t)\in T_p \mathbb{S}^7$ as 
\begin{equation*}
\theta(t)=\int_{0}^{t}{\Theta(s)}^{-1}d\Theta(s),
\end{equation*}
where $p$ is the north pole of $\mathbb{S}^7$. The octonionic winding form on $\mathbb{O}H^1$ is then the $T_p \mathbb{S}^7$-valued one-form $\eta$ such that
\begin{equation*}
\theta(t):=\int_{w[0,t]}\eta=\frac{1}{2} \int_{0}^{t}\frac{dw(s)\overline{w}(s)-d\overline{w}(s)w(s) }{|w(s)|^2}.
\end{equation*}

\subsection{Asymptotic winding of Brownian motion on $\mathbb{O}H^1$}



As we did in a previous section, a similar analogue holds for $(M,g)$ in the $8$-dimensional euclidean unit ball as the Riemannian homogeneous space. With the inhomogeneous coordinate \eqref{inhomo2}, once one writes $\triangle_{\mathbb{O}H^1}=\mathcal{L}+\triangle_{\mathbb{S}^{7}(r) }$, then those operators must be written as   
\begin{align}\label{eq:LB-OP2}
&\mathcal{L}=\frac{\partial^2}{\partial r^2}+14\coth(2r)\frac{\partial}{\partial r} ,\\
&\triangle_{\mathbb{S}^{7}(r) }=\frac{2}{\sinh^2 2r}\triangle_{\mathbb{S}^7}, \nonumber
\end{align}
(for example, see \cite{FBGC20,BAL78,SH65}).

Equivalently, $\triangle_{\mathbb{O}H^1}$ with a coordinate $w\in \mathbb{O}$ is given by 
\begin{equation*}
\triangle_{\mathbb{O}P^1}=4(1-|w|^2)^2\text{Re}\left(\frac{\partial^2}{\partial \overline{w}\partial w}\right)+24(1-|w|^2)\text{Re}\left( {w\frac{\partial}{\partial w}}\right),
\end{equation*}
here, with $w=\sum_{i=0}^{7}x_i e_i$,
\begin{equation*}
\frac{\partial}{\partial w}=\frac{1}{2} \left( \frac{\partial}{\partial x_0}e_0-\sum_{i=1}^{7}\frac{\partial }{\partial x_j}e_j\right).
\end{equation*}
In real coordinates, we have
\begin{equation*}
\triangle_{\mathbb{O}H^1}=\sech^4 r \left(\sum_{i=0}^{7}\frac{\partial^2}{\partial {x_i}^2} \right)+12\text{}\sech^2 r\left(\sum_{i=0}^{7}x_i\frac{\partial}{\partial x_i} \right).
\end{equation*}

Thus, the Brownian motion $\left(w(t)\right)_{t\geq 0}$ in $\mathbb{O}H^1$ solves the stochastic differential equation:
\begin{equation*}
dw(t)=\sech^2 r(t)dW(t)+6 \sech^2 r(t)w(t)dt,
\end{equation*}
where $\tanh r(t)=|w(t)|$ and $\left(W(t)\right)_{t\geq 0}$ is a standard Brownian motion in $\mathbb{O}$. Thus we can write the winding process as 
\begin{equation*}
\zeta(t)=\frac{1}{2}\int_{0}^{t}\frac{\overline{w}(s)dw(s)-d\overline{w}(s)w(s)}{|w(s)|^2},
\end{equation*}
or equivalently,
\begin{equation*}
\zeta(t)=\frac{1}{2}\int_{0}^{t}\frac{\overline{w}(s)dW(s)-d\overline{W}(s)w(s)}{\sinh^2 r(s) }.
\end{equation*}

As before, to study $\zeta$, we shall make use of a skew-product decomposition.

\begin{lemma}
	Let $w$ be a Brownian motion on $\mathbb{O}H^1$ not started from $0$. Then there exists a Jacobi process $(r(t))_{t\geq 0}$ with generator
	\begin{equation*}
	\frac{1}{2}\left(\frac{\partial^2}{\partial r^2}+14\coth(2r)\frac{\partial}{\partial r} \right)
	\end{equation*}
	and a Brownian motion $\Theta(t)$ on $\mathbb{S}^7$ independent from the process $(r(t))$ such that 
	\begin{equation*}
	w(t)=\tan r(t)\Theta_{A_t},
	\end{equation*}
	where 
	\begin{equation*}
	A_t:=\int_{0}^{t}\frac{4ds}{\sinh^2(2r(s))}.
	\end{equation*}
	\begin{proof}
This follows from \cite{PEJRLCG88} and with the fact from \eqref{eq:LB-OP1}, $\frac{1}{2}\triangle_{\mathbb{O}P^1}$ may be decomposed in polar coordinates as:
		\begin{equation*}
		\frac{1}{2}\left(\frac{\partial^2}{\partial r^2}+14\coth 2r \frac{\partial}{\partial r}+\frac{4}{\sinh^2 2r}\triangle_{\mathbb{S}^7} \right).
		\end{equation*}
		
	\end{proof}
\end{lemma}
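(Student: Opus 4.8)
The plan is to obtain the decomposition directly from the polar splitting of the generator recorded in \eqref{eq:LB-OP2} and then invoke the general skew-product principle of \cite{PEJRLCG88}, which is tailored to Riemannian submersions with totally geodesic fibres. Writing $L=\tfrac12\triangle_{\mathbb{O}H^1}$ for the generator of the Brownian motion $w$, the starting point is the warped-product form
\begin{equation*}
L=\frac{1}{2}\left(\frac{\partial^2}{\partial r^2}+14\coth(2r)\frac{\partial}{\partial r}+\frac{4}{\sinh^2 2r}\,\triangle_{\mathbb{S}^7}\right),
\end{equation*}
in which the radial and spherical pieces act on disjoint coordinates and no first-order cross term appears. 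This is exactly the structure — an autonomous radial generator plus a position-dependent multiple of the fibre Laplacian — to which the skew-product theorem applies, and the coefficient $\tfrac{4}{\sinh^2 2r}$ is what will dictate the clock $A_t$.

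First I would extract the radial process. Since $r$ enters only through the radial coordinate $\tanh r(t)=|w(t)|$, applying $L$ to $r$ annihilates the spherical term, and an application of It\^o's formula to $r(w(t))$ shows that $(r(t))_{t\ge 0}$ is a one-dimensional diffusion with generator $\tfrac12(\partial^2_r+14\coth(2r)\partial_r)$; equivalently, $r$ solves $dr(t)=7\coth(2r(t))\,dt+d\beta_t$ for a standard real Brownian motion $\beta$, which is the asserted Jacobi process. Next I would treat the angular component $\vartheta(t)=w(t)/|w(t)|\in\mathbb{S}^7$: reading off the spherical part of $L$, the process $\vartheta$ solves the martingale problem for the time-dependent operator $\tfrac12\cdot\tfrac{4}{\sinh^2 2r(t)}\triangle_{\mathbb{S}^7}$, so its driving martingale is orthogonal to $\beta$ (by the absence of cross terms) and its quadratic variation is governed by the clock $A_t=\int_0^t\frac{4}{\sinh^2 2r(s)}\,ds$. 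A Dambis--Dubins--Schwarz / Knight-type time change then identifies $\Theta(u):=\vartheta(A_t^{-1}(u))$ as a standard Brownian motion on $\mathbb{S}^7$ with generator $\tfrac12\triangle_{\mathbb{S}^7}$, whence $\vartheta(t)=\Theta(A_t)$ and $w(t)=\tanh r(t)\,\Theta(A_t)$.

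For independence, I would use that the radial driving noise $\beta$ and the angular martingale part are orthogonal continuous local martingales, and that the clock $A$ is a functional of $r$ — hence of $\beta$ — alone. Conditioning on the radial filtration, the angular equation becomes a deterministic time change of a spherical Brownian motion that is driven by noise orthogonal to $\beta$; Knight's theorem on orthogonal martingales then yields that the time-changed motion $\Theta$ is a genuine spherical Brownian motion independent of the entire radial path $(r(s))_{s\ge 0}$. This is precisely the conclusion packaged in \cite{PEJRLCG88}, so at this point the lemma follows by citing that result applied to the octonionic anti-de Sitter fibration.

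The step requiring the most care is the rigorous identification of the time-changed angular process as a bona fide Brownian motion on $\mathbb{S}^7$ together with its independence from $r$. The subtlety is that $\mathbb{S}^7$ carries no compatible Lie-group structure, since the octonions are non-associative; thus, unlike the $\mathbb{S}^1$ and $\mathbb{S}^3=SU(2)$ cases of \cite{FBJW16,FBJW19}, one cannot realize $\Theta$ through a left-invariant Maurer--Cartan SDE and must instead argue intrinsically with the round Riemannian Brownian motion, verifying the time change and the independence entirely at the level of generators and martingale problems. This is exactly where the formulation of \cite{PEJRLCG88} for submersions with totally geodesic fibres supplies the needed statement and closes the argument.
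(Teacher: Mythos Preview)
Your proposal is correct and follows essentially the same approach as the paper: both rest on the polar decomposition of $\tfrac12\triangle_{\mathbb{O}H^1}$ from \eqref{eq:LB-OP2} together with the skew-product result of \cite{PEJRLCG88}, and you have simply unpacked the mechanism (It\^o for the radial part, time change plus Knight's theorem for the angular part) that the paper delegates to the citation. Incidentally, your formula $w(t)=\tanh r(t)\,\Theta(A_t)$ is the right one, since $|w|=\tanh r$ in the hyperbolic model; the $\tan r(t)$ appearing in the lemma's statement is a typographical slip carried over from the $\mathbb{O}P^1$ case.
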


\begin{theo}
	For any \(\lambda\in\mathbb{R}^7 \), we have the following asymptotic result
	\begin{equation*}
	\lim_{t\rightarrow \infty} \mathbb{E}(e^{i\lambda\cdot \zeta(t)} )=(\tanh(r(0)))^{\sqrt{9+| \lambda |^2}-3} \bigg(1+\frac{ 6\sqrt{9+| \lambda |^2}-18}{\cosh^6(r(0))}A(\lambda) \bigg),
	\end{equation*}
	where \(A(\lambda)\) is given by
	\begin{equation*}
	\frac{\cosh^4(r(0))}{12}+\frac{(\sqrt{9+| \lambda |^2}-2)\cosh^2(r(0))}{60}+\frac{| \lambda |^2-3\sqrt{9+| \lambda |^2}+11}{720}.
	\end{equation*}
\end{theo}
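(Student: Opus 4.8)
The plan is to mimic the structure of the two preceding proofs: reduce the characteristic function to a Laplace transform of the clock $A_t$ via the skew-product decomposition, identify the $t\to\infty$ limit with the bounded solution of a Feynman--Kac ODE, and then solve that ODE — it will turn out to be a Gauss hypergeometric equation with one parameter equal to $-3$, so its relevant solution is a cubic polynomial in $1/\cosh^2 r(0)$, reproducing the claimed expression. Concretely, write $\rho=r(0)$ and $\mu=\sqrt{9+|\lambda|^2}-3$, so $\mu^2+6\mu=|\lambda|^2$. By the previous lemma $w(t)=\tanh r(t)\,\Theta_{A_t}$ with $A_t=\int_0^t 4\,ds/\sinh^2(2r(s))$, where $r$ is the stated Jacobi process and $\Theta$ an independent Brownian motion on $\mathbb S^7$; exactly as on $\mathbb O P^1$, conditioning on the path of $r$ makes $\zeta(t)$ a centred Gaussian on $\mathbb R^7$ with covariance $A_t I_7$, whence
\begin{equation*}
\mathbb E(e^{i\lambda\cdot\zeta(t)})=\mathbb E_\rho(e^{-\frac{|\lambda|^2}{2}A_t}).
\end{equation*}
Since $A_t$ is nondecreasing, $A_t\uparrow A_\infty\in[0,\infty]$ and monotone convergence gives $\lim_{t\to\infty}\mathbb E_\rho(e^{-\frac{|\lambda|^2}{2}A_t})=\mathbb E_\rho(e^{-\frac{|\lambda|^2}{2}A_\infty})$. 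The scale function of $r$ is $s(\rho)=\int^\rho(\sinh 2u)^{-7}\,du$, which satisfies $s(0^+)=-\infty$ and $s(\infty)<\infty$; hence $0$ is inaccessible and $r_t\to\infty$ a.s., so $\sinh^2(2r_s)$ grows exponentially along a.e.\ path, $A_\infty<\infty$ a.s., and $A_\infty\to 0$ in probability as $\rho\to\infty$.

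\emph{Step 2: the Feynman--Kac ODE.} Set $V(r)=2|\lambda|^2/\sinh^2(2r)$, so that $\tfrac{|\lambda|^2}{2}A_\infty=\int_0^\infty V(r_s)\,ds$, and put $h(\rho):=\mathbb E_\rho(e^{-\frac{|\lambda|^2}{2}A_\infty})\in[0,1]$. Applying It\^o to $e^{-\int_0^tV(r_s)ds}h(r_t)$ and using $r_\infty=\infty$, $h(\infty)=1$, shows that $h$ is characterized as the solution on $(0,\infty)$ of
\begin{equation*}
\tfrac12 h''(\rho)+7\coth(2\rho)\,h'(\rho)=\frac{2|\lambda|^2}{\sinh^2(2\rho)}\,h(\rho)
\end{equation*}
that is bounded near $0$ and satisfies $h(\infty)=1$. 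The indicial equation at $\rho=0$ is $\alpha^2+6\alpha-|\lambda|^2=0$ with roots $\mu$ and $-3-\sqrt{9+|\lambda|^2}$, so boundedness forces $h(\rho)\sim c\,\rho^{\mu}$ as $\rho\to 0$; together with the normalization at infinity (finite and nonzero, by transience) this determines $h$ uniquely.

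\emph{Step 3: solving the ODE.} Substitute $x=\tanh^2\rho\in(0,1)$ and $h=x^{\mu/2}g(x)$. Using $\coth 2\rho=(1+x)/(2\sqrt x)$ and $\sinh^2 2\rho=4x/(1-x)^2$, the $x^{-1}$-singularity cancels precisely because $\mu(\mu+6)=|\lambda|^2$, and $g$ satisfies
\begin{equation*}
x(1-x)g''(x)+\big[(\mu+4)-(\mu-2)x\big]g'(x)+3\mu\,g(x)=0,
\end{equation*}
i.e.\ the Gauss hypergeometric equation with $(a,b,c)=(-3,\mu,\mu+4)$. Since $a=-3$, the solution regular at $x=0$ is the terminating series $g(x)={}_2F_1(-3,\mu;\mu+4;x)$, a cubic in $x$; by Gauss's summation ($c-a-b=7>0$), $g(1)=\Gamma(\mu+4)\Gamma(7)/(\Gamma(\mu+7)\Gamma(4))=120/((\mu+4)(\mu+5)(\mu+6))$, so $h(\infty)=1$ forces
\begin{equation*}
h(\rho)=\frac{(\mu+4)(\mu+5)(\mu+6)}{120}\,(\tanh\rho)^{\mu}\,{}_2F_1\!\big(-3,\mu;\mu+4;\tanh^2\rho\big).
\end{equation*}
Writing $x=1-\cosh^{-2}\rho$ and expanding the cubic, the constant term is $1$ and the coefficients of $\cosh^{-2}\rho,\cosh^{-4}\rho,\cosh^{-6}\rho$ simplify to $\mu/2$, $\mu(\mu+1)/10$, $\mu(\mu+1)(\mu+2)/120$; regrouping these as $\frac{6\mu}{\cosh^6\rho}\big(\frac{\cosh^4\rho}{12}+\frac{(\mu+1)\cosh^2\rho}{60}+\frac{(\mu+1)(\mu+2)}{720}\big)$ and using $\mu+1=\sqrt{9+|\lambda|^2}-2$ together with $(\mu+1)(\mu+2)=|\lambda|^2-3\sqrt{9+|\lambda|^2}+11$ yields the stated formula.

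\emph{Main obstacle and an alternative.} The crux is Step 3: choosing the change of variables so that the singular potential becomes the \emph{constant} term of a hypergeometric equation, recognizing the parameter $-3$ (which forces the answer to be a cubic in $\cosh^{-2}r(0)$), and carrying out the normalization via Gauss's theorem; the rest is routine bookkeeping, though the martingale property in Step 2 and the boundary analysis also need to be written out carefully. One could instead imitate Yor's method from the earlier sections: the exponential local martingale $D^\mu_t=\big(\frac{\sinh 2r(t)}{\sinh 2r(0)}\big)^{\mu}e^{-2\mu t}\exp\!\big(-2(\mu^2+6\mu)\!\int_0^t\coth^2(2r(s))\,ds\big)$ with $\mu=\sqrt{9+|\lambda|^2}-3$, together with Girsanov's theorem, gives $\mathbb E_\rho(e^{-\frac{|\lambda|^2}{2}A_t})=(\sinh 2\rho)^{\mu}e^{2(|\lambda|^2+\mu)t}\,\mathbb E^{\mu}_\rho\big((\sinh 2r(t))^{-\mu}\big)$; but extracting the $t\to\infty$ limit then requires the precise long-time asymptotics of that last expectation for the shifted Jacobi process, equivalently heat-kernel estimates on octonionic hyperbolic space, which is at least as delicate. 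I would therefore present the Feynman--Kac route above.
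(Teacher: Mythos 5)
Your proof is correct and arrives at exactly the paper's formula, but by a genuinely different route. The paper stays within the Girsanov framework it used for $\mathbb{O}^1$ and $\mathbb{O}P^1$: it introduces a \emph{two-parameter} exponential martingale $D^{(a,b)}_t$ built from $a\coth(r)+b\tanh(r)$ (using $\sinh^{-2}(2r)=\tfrac14(\coth^2 r+\tanh^2 r-2)$), chooses $\hat a,\hat b$ as the two roots of $x^2+6x-|\lambda|^2=0$, changes measure, expands $\tanh(r(t))^{3-\sqrt{9+|\lambda|^2}}$ as a binomial series in $\cosh^{-2}(r(t))$, argues that only the $\cosh^6$ term survives the $e^{-24t}$ normalization, and then computes $\mathbb{E}^{(\hat a,\hat b)}(\cosh^{2k}(r(t)))$ for $k=3,2,1$ through a triangular system of first-order linear ODEs. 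You instead pass to $\mathbb{E}_\rho(e^{-\frac{|\lambda|^2}{2}A_\infty})$ by monotone convergence, characterize it by a Feynman--Kac ODE, and solve that ODE exactly as a terminating hypergeometric series normalized by Gauss's theorem; I have checked the substitution (the coefficients $x(1-x)$, $(\mu+4)-(\mu-2)x$, $3\mu$, hence $(a,b,c)=(-3,\mu,\mu+4)$), the value $g(1)=120/((\mu+4)(\mu+5)(\mu+6))$, and the expansion in $u=\cosh^{-2}\rho$ with coefficients $\mu/2$, $\mu(\mu+1)/10$, $\mu(\mu+1)(\mu+2)/120$, which reproduce $A(\lambda)$ via $(\mu+1)(\mu+2)=|\lambda|^2-3\sqrt{9+|\lambda|^2}+11$. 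Your approach buys a closed-form identification of the limit as a ${}_2F_1$ and a clean bounded-martingale justification of steps the paper leaves implicit (the interchange of the $t\to\infty$ limit with the infinite binomial series, and the claim that the lowest-order term dominates); the paper's approach keeps all three geometries under one uniform Girsanov scheme and needs only elementary moment ODEs. Two small points: in Step 2 you apply It\^o to $h(r_t)$ before knowing $h$ is $C^2$ --- the argument is cleaner run in the other direction, verifying that your explicit Step 3 solution $\tilde h$ yields a bounded martingale $e^{-\int_0^t V(r_s)ds}\tilde h(r_t)$ and hence equals $h$; and your closing remark slightly mischaracterizes the Girsanov alternative, since the paper's two-parameter change of measure reduces the remaining expectation to polynomial moments of $\cosh(r(t))$ and thus avoids the heat-kernel asymptotics you anticipate.
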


\begin{proof}
	Let \(\beta \) be a standard Brownian motion on \(\mathbb{R}^7 \). Since \(\zeta(t)=\beta_{\int_{0}^{t}\frac{4ds}{\sinh^2(2r(s))} } \) in distribution, the characteristic function of \(\zeta(t) \) is given by
	\begin{equation*}
	\mathbb{E}(e^{i\lambda\cdot \zeta(t)} )=\mathbb{E}(e^{-| \lambda |^2\int_{0}^{t}\frac{2ds}{\sinh^2(2r(s))} } ).
	\end{equation*}
	In order to apply Yor's method, we introduce the following local martingale
	\begin{equation*}
	D^{(a,b)}_t=\exp \left\{ \int_{0}^{t}a \coth(r(s))+b \tanh (r(s))dB_s-\frac{1}{2}\int_{0}^{t}(a \coth(r(s))+b \tanh (r(s)) )^2ds   \right\}.
	\end{equation*}
	By It\^{o}'s formula, 
	\begin{align*}
	D^{(a,b)}_t=e^{-4(a+b+\frac{ab}{4})t}& \bigg(\frac{\sinh(r(t) )}{\sinh(r(0))}   \bigg)^a  \bigg(\frac{\cosh(r(t) )}{\cosh(r(0))}   \bigg)^b\\
	&\exp \left\{-\frac{a^2+6a}{2}\int_{0}^{t}\coth^2(r(s))ds-\frac{b^2+6b}{2}\int_{0}^{t}\tanh^2(r(s))ds  \right\}.
	\end{align*}
	Note that
	\begin{equation*}
	\frac{1}{\sinh^2(2r(s))}=\frac{\coth^2(r(s))+\tanh^2(r(s))-2}{4}.
	\end{equation*}
	Let \(\hat{a} ,\hat{b}  \) be the solution of 
	\begin{equation*}
	x^2+6x-| \lambda |^2=0.
	\end{equation*}
	More precisely, 
	\begin{equation*}
	\hat{a}=-3+\sqrt{9+| \lambda |^2}\;,\; \hat{b}=-3-\sqrt{9+| \lambda |^2}.
	\end{equation*}
	Then 
	\begin{align*}
	D^{(\hat{a},\hat{b})}_t&=e^{24t} \bigg(\frac{\sinh(r(t) )}{\sinh(r(0))}   \bigg)^{-3+\sqrt{9+| \lambda |^2}}  \bigg(\frac{\cosh(r(t) )}{\cosh(r(0))}   \bigg)^{-3-\sqrt{9+| \lambda |^2}}\exp \left\{ \int_{0}^{t}\frac{-2| \lambda |^2}{\sinh^2(2r(s))} ds  \right\}\\
	&=e^{24t} \bigg(\frac{\tanh(r(t) )}{\tanh(r(0))}   \bigg)^{-3+\sqrt{9+\mid \lambda \mid^2}}  \bigg(\frac{\cosh(r(0) )}{\cosh(r(t))}   \bigg)^{6}\exp \left\{ \int_{0}^{t}\frac{-2| \lambda |^2}{\sinh^2(2r(s))} ds  \right\}.
	\end{align*}
	Since \(D^{(\hat{a},\hat{b})}_t \) is bounded, it is of class DL and hence a martingale. We can now define a new probability
	\begin{equation*}
	\mathbb{P}^{(\hat{a},\hat{b})}\mid_{\mathcal{F}_t}=D^{(\hat{a},\hat{b})}_t\mathbb{P}\mid_{\mathcal{F}_t}.
	\end{equation*}
	By Girsanov's theorem, we have
	\begin{align*}
	\mathbb{E}(e^{i\lambda\cdot \zeta(t)} )&=\mathbb{E}^{(\hat{a},\hat{b}) }\left( e^{-24t} \bigg(\frac{\tanh(r(t) )}{\tanh(r(0))}   \bigg)^{3-\sqrt{9+| \lambda |^2}}  \bigg(\frac{\cosh(r(t) )}{\cosh(r(0))}   \bigg)^{6}\right) \\
	&=e^{-24t} \frac{(\tanh(r(0)))^{\sqrt{9+| \lambda |^2}-3} }{\cosh^6(r(0))} \mathbb{E}^{(\hat{a},\hat{b}) }\left( \frac{\cosh^6(r(t))}{\tanh(r(t) )^{\sqrt{9+| \lambda |^2}-3} }    \right)  \\
	&=e^{-24t} \frac{(\tanh(r(0)))^{\sqrt{9+| \lambda |^2}-3} }{\cosh^6(r(0))} \mathbb{E}^{(\hat{a},\hat{b}) }\left( (1-\frac{1}{\cosh^2(r(t))} )^{-\frac{\hat{a}}{2}} \cosh^6(r(t))  \right)\\
	&=e^{-24t} \frac{(\tanh(r(0)))^{\sqrt{9+| \lambda |^2}-3} }{\cosh^6(r(0))} \sum_{k=0}^{\infty}\frac{(\hat{a}/2)_k}{k!} \mathbb{E}^{(\hat{a},\hat{b}) }\left(\frac{1}{\cosh^{2k-6}(r(t))}\right). 
	\end{align*}
	As before, the asymptotic behavior is determined by the lowest order term. Thus,
	\begin{equation*}
	\lim_{t\rightarrow \infty} \mathbb{E}(e^{i\lambda\cdot \zeta(t)} )=\frac{(\tanh(r(0)))^{\sqrt{9+| \lambda |^2}-3} }{\cosh^6(r(0))} \lim_{t\rightarrow \infty} e^{-24t}\mathbb{E}^{(\hat{a},\hat{b}) }(\cosh^{6}(r(t))).
	\end{equation*}
	Under this new probability measure \(\mathbb{P}^{(\hat{a},\hat{b})} \), \(r(t) \) solves the following stochastic differential equation
	\begin{equation*}
	r(t)=r(0)+\int_{0}^{t}\bigg[ (\hat{a}+\frac{7}{2})\coth(r(s))+(\hat{b}+\frac{7}{2} )\tanh(r(s))      \bigg]    ds+d\hat{B}_t,
	\end{equation*}
	where \(\hat{B}_t \) is a standard Brownian motion under \(\mathbb{P}^{(\hat{a},\hat{b})}  \). By It\^o's formula, we have
	\begin{align*}
	\frac{d }{dt}\mathbb{E}^{(\hat{a},\hat{b}) }(\cosh^{6}(r(t)))&=(6\hat{a}+6\hat{b}+18)\mathbb{E}^{(\hat{a},\hat{b})}(\cosh^{6}(r(t)))-(6\hat{b}+\frac{5}{2})\mathbb{E}^{(\hat{a},\hat{b})}(\cosh^{4}(r(t)))\\
	&=24\mathbb{E}^{(\hat{a},\hat{b})}(\cosh^{6}(r(t)))-(6\hat{b}+36)\mathbb{E}^{(\hat{a},\hat{b})}(\cosh^{4}(r(t))).
	\end{align*}
	By solving this differential equation, we have
	\begin{equation*}
	\mathbb{E}^{(\hat{a},\hat{b}) }(\cosh^{6}(r(t)))=e^{24t}\bigg(\cosh^{6}(r(0))-(6\hat{b}+36)\int_{0}^{t}\frac{\mathbb{E}^{(\hat{a},\hat{b}) }(\cosh^{4}(r(s)))}{e^{24s}} ds\bigg).
	\end{equation*}
	By using a similar argument, we have
	\begin{align*}
	\mathbb{E}^{(\hat{a},\hat{b}) }(\cosh^{4}(r(t)))&=e^{12t}\bigg(\cosh^{4}(r(0))-(4\hat{b}+20)\int_{0}^{t}\frac{\mathbb{E}^{(\hat{a},\hat{b}) }(\cosh^{2}(r(s)))}{e^{12s}} ds\bigg),\\
	\mathbb{E}^{(\hat{a},\hat{b}) }(\cosh^{2}(r(t)))&=e^{4t}\bigg(\cosh^{2}(r(0))- \frac{2\hat{b}+8}{4}\bigg)+\frac{2\hat{b}+8}{4}.
	\end{align*}
	Putting everything together, we end up with
	\begin{equation*}
	\lim_{t\rightarrow \infty} \mathbb{E}(e^{i\lambda\cdot \zeta(t)} )=\frac{(\tanh(r(0)))^{\sqrt{9+| \lambda |^2}-3} }{\cosh^6(r(0))}\bigg(\cosh^{6}(r(0))+(6\sqrt{9+| \lambda |^2}-18)A(\lambda) \bigg),
	\end{equation*}
	where \(A(\lambda)\) is given by
	\begin{equation*}
	\frac{\cosh^4(r(0))}{12}+\frac{(\sqrt{9+| \lambda |^2}-2)\cosh^2(r(0))}{60}+\frac{| \lambda |^2-3\sqrt{9+| \lambda |^2}+11}{720}.
	\end{equation*}
	
\end{proof}

\begin{remark}
Unlike in the case of $\mathbb{O}$ or $\mathbb{O}P^1$, the formula of the limiting behavior of the Brownian winding functional of $\mathbb{O}H^1$ is different from its counterpart of $4$-dimensional quaternionic hyperbolic space $\mathbb{H}H^1$. This is due to the fact that limit behaviour of the Brownian winding in $\mathbb{H}H^1$ and $\mathbb{O}H^1$ are sensitive to the parameter of the corresponding Bessel process, while in the first two cases, different parameters only give different scaling.    
\end{remark}

{\bf Acknowledgement:} The authors thank the anonymous referee for a careful reading and remarks that greatly improved the presentation of the paper. The second author is supported by National Science Foundation grant DMS-1901315. Both authors thank professor Fabrice Baudoin for the very helpful discussions. 

\bibliographystyle{spmpsci}
\bibliography{reference}
\end{document}